\documentclass[reqno,11pt]{amsart}
\usepackage{aliascnt}
\usepackage{hyperref}
\usepackage{amsfonts}
\usepackage{mathrsfs}
\usepackage{amsmath}
\usepackage{amsmath,graphics}
\usepackage{amssymb}
\usepackage{indentfirst,latexsym,bm,amsmath,pstricks,amssymb,amsthm,graphicx,fancyhdr,float,color}
\usepackage[all]{xy}
\usepackage{amsthm}
\usepackage{amscd}
\usepackage{hyperref}
\usepackage[all]{hypcap}
\setlength{\parskip}{0.5mm}

\newcommand{\FF}{\mathbb{F}}

\newcommand{\NN}{\mathbb{N}}

\newcommand{\fg}{\mathfrak{g}}

\newcommand{\fh}{\mathfrak{h}}

\newcommand{\fn}{\mathfrak{n}}

\newcommand{\ft}{\mathfrak{t}}

\DeclareMathOperator{\Aut}{Aut}

\DeclareMathOperator{\Ext}{Ext}
\DeclareMathOperator{\GL}{GL}

\DeclareMathOperator{\rk}{rk}

\DeclareMathOperator{\Tor}{Tor}

\setlength{\unitlength}{0.1mm}
\theoremstyle{plain}
\numberwithin{equation}{section}
\newtheorem{Theorem}{Theorem}[section]
\newtheorem{Lemma}[Theorem]{Lemma}
\newtheorem{Corollary}[Theorem]{Corollary}
\newtheorem{Proposition}[Theorem]{Proposition}

\theoremstyle{Theorem}

\theoremstyle{remark}
\newtheorem*{Remark}{Remark}
\newtheorem*{Remarks}{Remarks}

\newtheorem*{Example}{Example}

\numberwithin{equation}{section}

\begin{document}
\title{A note on the rank of a restricted Lie algebra}
\author{Hao Chang}
\address{Mathematisches Seminar, Christian-Albrechts-Universit\"at zu Kiel, Ludewig-Meyn-Str. 4, 24098 Kiel, Germany}
\email{chang@math.uni-kiel.de}
\date{\today}
\begin{abstract}
In this short note, we study the rank of a restricted Lie algebra $(\fg,[p])$ and give some applications, which concerns the dimensions of non-trivial irreducible modules.
We also compute the rank of the restricted contact algebra $K(n)$.
\end{abstract}
\maketitle

\section{Introduction}
The theorem on the conjugacy of Cartan subalgebras in finite-dimensional Lie algebras over an algebraically closed field of characteristic $0$ is a classical result in the Lie theory.
However, it can not be extended to Lie algebras of prime characteristic.

Let $k$ be an algebraically closed field
of characteristic $\mathrm{char}(k)=p>0$ and $(\fg,[p])$
be a restricted Lie algebra. By general theory, the Cartan subalgebras of $\fg$ are precisely the centralizers of maximal tori, see for instance \cite[\S 2, Theorem 4.1]{SF}. 
There exist numerous examples of restricted Lie algebras that contain Cartan subalgebras of different dimensions.
We are interested in the tori of maximal dimension, which play
an important role in the classification theory (\cite{BW}).
Premet's theorem ensures that the invariance of the dimension of Cartan subalgebras with maximal toral part (\cite[Theorem 1]{Pre}, see also \cite[(3.5(1))]{Fa04}). The dimension is the so-called rank of $\fg$, say $\rk(\fg)$.

The aim of this note is to study the rank of a restricted Lie algebra.
We give some applications, which concerns the dimensions of non-trivial irreducible modules. We also compute the rank of the restricted contact algebra $K(n)$.

\section{The rank of a Lie algebra}
Throughout, we shall be working over an algebraically closed field $k$
of characteristic $\mathrm{char}(k)=p>0$.
Given a restricted Lie algebra $(\fg,[p])$,
we denote by $\mu(\fg)$ and $\rk(\fg)$ the maximal dimension of all tori
$\ft\subseteq\fg$ and the minimal dimension of all Cartan subalgebra $\fh\subseteq\fg$, respectively. 
Moreover, $\dim_k C_{\fg}(\ft)=\rk(\fg)$ for every torus $\ft$ of maximal dimension (\cite[Theorem 1]{Pre}).
The following Lemma is well-known, see for example \cite[Theorem 2.16]{Winter} for maximal tori and \cite[Lemma 1.7.2]{BW} for tori of maximal
dimension.

\begin{Lemma}\label{mu}
Let $(\fg,[p])$ be a restricted Lie algebra,
$\fn\unlhd\fg$ be a $p$-ideal.
Then the following statements hold:
\begin{enumerate}
\item[(1)] If $\ft\subseteq\fg$ is a torus of maximal dimension,
then $\ft\cap\fn$ and $\ft+\fn/\fn$ are tori of maximal dimension of $\fn$
and $\fg/\fn$, respectively.
\item[(2)] $\mu(\fg)=\mu(\fn)+\mu(\fg/\fn)$.
\end{enumerate}
\end{Lemma}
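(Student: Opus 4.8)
The plan is to reduce both parts of the lemma to two facts about a torus $\ft\subseteq\fg$ of maximal dimension and the quotient map $\pi\colon\fg\to\fg/\fn$: that $\dim_k\pi(\ft)=\mu(\fg/\fn)$, and that $\mu(\fg)\ge\mu(\fn)+\mu(\fg/\fn)$. Granting these, one argues as follows. First, $\ft\cap\fn$ is a torus of $\fn$ (a restricted subalgebra of the torus $\ft$) and $\pi(\ft)$ is a torus of $\fg/\fn$ (the image of a torus under a homomorphism of restricted Lie algebras, which preserves $[p]$ and hence $[p]$-semisimplicity), so $\mu(\fg)=\dim_k\ft=\dim_k(\ft\cap\fn)+\dim_k\pi(\ft)\le\mu(\fn)+\mu(\fg/\fn)$. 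Combining this with the two facts yields $\dim_k(\ft\cap\fn)=\mu(\fg)-\mu(\fg/\fn)\ge\mu(\fn)$, while $\dim_k(\ft\cap\fn)\le\mu(\fn)$ since $\ft\cap\fn$ is a torus of $\fn$; hence equality holds throughout, so $\ft\cap\fn$ and $\pi(\ft)$ have the asserted dimensions and $\mu(\fg)=\mu(\fn)+\mu(\fg/\fn)$. Thus (1) and (2) both follow.

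Three standard facts will be used. \emph{(i)} Every $x\in\fg$ has a Jordan--Chevalley decomposition $x=x_s+x_n$ with $x_s$ $[p]$-semisimple, $x_n$ $[p]$-nilpotent, $[x_s,x_n]=0$, both lying in the $p$-subalgebra $\langle x\rangle_p$ generated by $x$; it is preserved by homomorphisms, so $\pi(x)_s=\pi(x_s)$. \emph{(ii)} A sum of pairwise commuting tori is a torus: in a faithful restricted $\fg$-module a commuting family of $[p]$-semisimple elements acts by simultaneously diagonalisable operators, and $(a+b)^{[p]}=a^{[p]}+b^{[p]}$ whenever $[a,b]=0$. \emph{(iii)} If $\ft$ is a torus then $\ad\ft$ is semisimple, so $\fg=C_{\fg}(\ft)\oplus[\fg,\ft]$; comparing this decomposition with its image under $\pi$ and using that $\ad\pi(\ft)$ is semisimple on $\fg/\fn$ gives $\pi\bigl(C_{\fg}(\ft)\bigr)=C_{\fg/\fn}\bigl(\pi(\ft)\bigr)$ for every $p$-ideal $\fn$.

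The maximality of $\pi(\ft)$ is proved by contradiction. If $\pi(\ft)$ lay properly inside a torus $\bar{\fs}$ of $\fg/\fn$, choose a $[p]$-semisimple $\bar x\in\bar{\fs}\setminus\pi(\ft)$; since $\bar{\fs}$ is abelian, $\bar x\in C_{\fg/\fn}(\pi(\ft))$, so by (iii) there is $x\in C_{\fg}(\ft)$ with $\pi(x)=\bar x$. As $C_{\fg}(\ft)$ is a restricted subalgebra, $x_s\in C_{\fg}(\ft)$, and $\pi(x_s)=\bar x\ne 0$, so $x_s\notin\ft$; by (ii) the commuting sum $\ft+\langle x_s\rangle_p$ is then a torus strictly larger than $\ft$, contradicting the maximal dimensionality of $\ft$. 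Hence $\dim_k\pi(\ft)=\mu(\fg/\fn)$.

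The remaining inequality $\mu(\fg)\ge\mu(\fn)+\mu(\fg/\fn)$ is where the real work lies. The tool is a lifting lemma: \emph{every torus of $\fg/\fn$ is the $\pi$-image of a torus of $\fg$}, which I would prove by induction on the dimension of the given torus $\bar{\fu}$ — pick a toral element $\bar e\in\bar{\fu}$, lift it to some $x\in\fg$ and replace $x$ by $x_s$ so that $\pi(x_s)=\bar e$, extract from $\langle x_s\rangle_p$ a toral element $e$ with $\pi(e)=\bar e$, then work inside $C_{\fg}(e)$, whose image under $\pi$ is $C_{\fg/\fn}(\bar e)$ by (iii) and hence contains a sub-torus complementary to $k\bar e$ in $\bar{\fu}$; lift it inside $C_{\fg}(e)$ by induction and adjoin $ke$ using (ii). Granting the lifting lemma, it suffices to show that a torus $\fs\subseteq\fn$ with $\dim_k\fs=\mu(\fn)$, regarded as a torus of $\fg$, lies in a torus $\ft'$ of $\fg$ with $\dim_k\ft'=\mu(\fg)$: indeed then $\ft'\cap\fn\supseteq\fs$, and the maximality of $\pi(\ft')$ already established gives $\dim_k(\ft'\cap\fn)=\mu(\fg)-\mu(\fg/\fn)$, whence $\mu(\fg)-\mu(\fg/\fn)\ge\mu(\fn)$. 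So the crux reduces to the assertion that every torus of $\fg$ extends to one of maximal dimension; I would attack this by induction on $\dim_k\fg$, disposing of a central torus by adding it to any maximal-dimensional torus via (ii) and of a non-central toral element $e$ by passing to the proper restricted subalgebra $C_{\fg}(e)$ and coming back by the lifting lemma. Making this induction close — in effect, controlling $\mu\bigl(C_{\fg}(e)\bigr)$ — is the step I expect to be the main obstacle.
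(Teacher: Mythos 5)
Your reduction of part (1) to part (2), the easy inequality $\mu(\fg)\le\mu(\fn)+\mu(\fg/\fn)$, and fact (iii) (that $\pi(C_{\fg}(\ft))=C_{\fg/\fn}(\pi(\ft))$, via the weight space decomposition of $\fg$ as a $\ft$-module) are all fine, and the lifting of tori along $\pi$ is indeed a standard fact. But there are two genuine problems. First, your contradiction argument only shows that $\pi(\ft)$ is maximal with respect to \emph{inclusion}; concluding $\dim_k\pi(\ft)=\mu(\fg/\fn)$ from this conflates ``maximal torus'' with ``torus of maximal dimension'', and in prime characteristic these notions genuinely differ -- this is exactly why the paper cites both Winter (maximal tori) and Block--Wilson (tori of maximal dimension) for this lemma. (This particular slip would become harmless once $\mu(\fg)\ge\mu(\fn)+\mu(\fg/\fn)$ is known, since then $\dim_k\pi(\ft)\ge\mu(\fg)-\mu(\fn)\ge\mu(\fg/\fn)$; but that is precisely the part you do not prove.)

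Second, and decisively, the inequality $\mu(\fg)\ge\mu(\fn)+\mu(\fg/\fn)$ -- the real content of the lemma -- is left open: you reduce it to the assertion that every torus of $\fg$ is contained in a torus of maximal dimension and acknowledge you cannot close the induction. That assertion is false in general. If every maximal torus had dimension $\mu(\fg)$, then every Cartan subalgebra $C_{\fg}(\ft)$ would have maximal toral part, and Premet's theorem (cited in the paper) would force all Cartan subalgebras to have the common dimension $\rk(\fg)$; but, as recalled in the introduction, there exist restricted Lie algebras with Cartan subalgebras of different dimensions. Hence a maximal torus can have dimension strictly smaller than $\mu(\fg)$, i.e.\ a torus need not embed into a torus of maximal dimension, so your proposed route cannot work as stated; and whether the special case you actually need (a torus of maximal dimension of the $p$-ideal $\fn$ extends to one of $\fg$) holds is essentially equivalent to the lemma itself, so nothing is gained by the reduction. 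Note that the paper does not reprove the statement: it quotes [Winter, Theorem 2.16] and [BW, Lemma 1.7.2], where the hard inclusion is obtained by different techniques (toral switching), not by extending arbitrary tori to tori of maximal dimension.
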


In fact, we can find some subtorus $\ft'\subseteq\ft$ such that
$\ft=(\ft\cap\fn)\oplus\ft'$ (see for example \cite[Page 1347]{BFS}).

\begin{Proposition}\label{rk}
Let $\fn\unlhd\fg$ be a $p$-ideal. Then the following statements hold:
\begin{enumerate}
\item[(1)] $\rk(\fg)\leq\rk(\fn)+\rk(\fg/\fn)$.
\item[(2)] If $\ft\subseteq\fg$ is a torus of maximal dimension, then
$$\rk(\fg)=\rk(\ft+\fn)+\rk(\fg/\fn)-\mu(\fg/\fn).$$
\end{enumerate}
\end{Proposition}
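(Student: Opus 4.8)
The plan is to first establish, for an arbitrary $p$-ideal $\fn\unlhd\fg$ and an arbitrary torus $\ft\subseteq\fg$ of maximal dimension, the identity
\begin{equation}\label{keyidentity}
\rk(\fg)=\rk(\fg/\fn)+\dim_k C_\fn(\ft),
\end{equation}
and then to deduce both assertions from it, using throughout Premet's theorem quoted above: for any torus $\ft\subseteq\fg$ of maximal dimension one has $\rk(\fg)=\dim_k C_\fg(\ft)$.

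To prove \eqref{keyidentity} I would use the weight space decomposition of $\fg$ under the adjoint action of $\ft$. Since $\ft$ is a torus, $\ad(\ft)$ is a commuting family of semisimple operators on $\fg$, so $\fg=\bigoplus_{\lambda\in\ft^{*}}\fg_{\lambda}$ with $\fg_{0}=C_\fg(\ft)$. The $p$-ideal $\fn$ is $\ad(\ft)$-stable, so $0\to\fn\to\fg\to\fg/\fn\to 0$ is a short exact sequence of semisimple $\ft$-modules and respects weight spaces, whence $(\fg/\fn)_{\lambda}\cong\fg_{\lambda}/(\fn\cap\fg_{\lambda})$ for every $\lambda$. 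Moreover $\ft\cap\fn$ acts trivially on $\fg/\fn$, being contained in the ideal $\fn$; hence the $\ft$-weights on $\fg/\fn$ are precisely those of the quotient torus $\bar\ft=(\ft+\fn)/\fn$, and therefore $C_{\fg/\fn}(\bar\ft)=(\fg/\fn)_{0}\cong\fg_{0}/(\fn\cap\fg_{0})=C_\fg(\ft)/C_\fn(\ft)$. By Lemma~\ref{mu}(1), $\bar\ft$ is a torus of maximal dimension in $\fg/\fn$, so Premet's theorem applies to both $(\fg,\ft)$ and $(\fg/\fn,\bar\ft)$, and taking dimensions in the displayed isomorphism gives \eqref{keyidentity}.

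Assertion (1) follows at once: since $\ft\cap\fn\subseteq\ft$ we have $C_\fn(\ft)\subseteq C_\fn(\ft\cap\fn)$, and by Lemma~\ref{mu}(1) the torus $\ft\cap\fn$ has maximal dimension in $\fn$, so Premet's theorem gives $\dim_k C_\fn(\ft)\le\dim_k C_\fn(\ft\cap\fn)=\rk(\fn)$; inserting this into \eqref{keyidentity} yields $\rk(\fg)\le\rk(\fn)+\rk(\fg/\fn)$. For assertion (2), note that $\ft+\fn$ is a restricted subalgebra of $\fg$ in which $\fn$ is a $p$-ideal, and that $\ft$ is a torus of maximal dimension in $\ft+\fn$: by Lemma~\ref{mu} one computes
\[
\mu(\ft+\fn)=\mu(\fn)+\mu\bigl((\ft+\fn)/\fn\bigr)=\dim_k(\ft\cap\fn)+\bigl(\dim_k\ft-\dim_k(\ft\cap\fn)\bigr)=\dim_k\ft,
\]
using that the torus $(\ft+\fn)/\fn$ is its own maximal torus. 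Applying \eqref{keyidentity} to the triple $(\ft+\fn,\fn,\ft)$ gives $\rk(\ft+\fn)=\rk\bigl((\ft+\fn)/\fn\bigr)+\dim_k C_\fn(\ft)$; as $(\ft+\fn)/\fn$ is abelian, it is its own Cartan subalgebra, so $\rk\bigl((\ft+\fn)/\fn\bigr)=\dim_k(\ft+\fn)/\fn=\mu(\fg/\fn)$ by Lemma~\ref{mu}(1). Hence $\dim_k C_\fn(\ft)=\rk(\ft+\fn)-\mu(\fg/\fn)$, and substituting into \eqref{keyidentity} gives the asserted formula $\rk(\fg)=\rk(\ft+\fn)+\rk(\fg/\fn)-\mu(\fg/\fn)$.

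The linear algebra of weight spaces is routine once the semisimplicity of $\ad(\ft)$ is noted; the point that needs care is the repeated appeal to Lemma~\ref{mu} to guarantee that the tori in play ($\ft\cap\fn$ in $\fn$, $\bar\ft$ in $\fg/\fn$, and $\ft$ in $\ft+\fn$) are of maximal dimension in the relevant subquotient, since this is exactly what licenses Premet's theorem at each step.
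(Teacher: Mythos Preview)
Your proof is correct and follows essentially the same approach as the paper's: both establish the identity $\rk(\fg)=\rk(\fg/\fn)+\dim_k C_\fn(\ft)$ from the splitting of $0\to\fn\to\fg\to\fg/\fn\to 0$ as $\ft$-modules (you phrase this via weight spaces, the paper via semisimplicity of $\ft$-modules), then deduce (1) from $C_\fn(\ft)\subseteq C_\fn(\ft\cap\fn)$. For (2) the paper computes $\dim_k C_{\ft+\fn}(\ft)=\dim_k C_\fn(\ft)+\dim_k\ft'$ directly using the decomposition $\ft=(\ft\cap\fn)\oplus\ft'$, whereas you re-apply the key identity to the pair $(\ft+\fn,\fn)$; these are the same computation packaged differently.
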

\begin{proof}
Let $\ft\subseteq\fg$ be a torus of maximal dimension.
We write $\ft=(\ft\cap\fn)\oplus\ft'$ and observe that the short exact sequence
$$(0)\rightarrow\fn\rightarrow\fg\rightarrow\fg/\fn\rightarrow(0)$$
of $\ft$-modules splits. Consequently, $C_{\fg}(\ft)\cong C_{\fn}(\ft)\oplus C_{\fg/\fn}(\ft)$.
Since $C_{\fn}(\ft)\subseteq C_{\fn}(\ft\cap\fn)$, Lemma \ref{mu} yields
\begin{equation}\label{1}
\rk(\fg)=\dim_k C_{\fn}(\ft)+\rk(\fg/\fn)\leq\rk(\fn)+\rk(\fg/\fn).
\end{equation}

For the proof of (2), we note that
$$\rk(\ft+\fn)=\dim_k C_{\ft+\fn}(\ft)=\dim_k C_{\fn}(\ft)+\dim_k\ft'=\dim_k C_{\fn}(\ft)+\mu(\fg/\fn),$$
so that (\ref{1}) implies
\begin{equation}
\rk(\fg)=\dim_kC_{\fn}(\ft)+\rk(\fg/\fn)=\rk(\ft+\fn)+\rk(\fg/\fn)-\mu(\fg/\fn),
\end{equation}
as desired.
\end{proof}

A restricted Lie algebra $(\fg,[p])$ is called $p$-nilpotent if $\fg$ consists
of only $p$-nilpotent elements. In particular, $\fg$ is $p$-nilpotent
if and only if $\mu(\fg)=0$.

\begin{Corollary}\label{corollary}
Let $\fn\unlhd\fg$ be a p-ideal. If $\fg/\fn$ is $p$-nilpotent, then
$\mu(\fg)=\mu(\fn)$ and $\rk(\fg)=\rk(\fn)+\dim_k\fg/\fn$.
\end{Corollary}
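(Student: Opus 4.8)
The plan is to feed the defining feature of $p$-nilpotence — recorded just before the statement, namely that $\fg/\fn$ is $p$-nilpotent if and only if $\mu(\fg/\fn)=0$ — into Lemma \ref{mu} and Proposition \ref{rk}; no step here is genuinely difficult. The equality $\mu(\fg)=\mu(\fn)$ is immediate: Lemma \ref{mu}(2) gives $\mu(\fg)=\mu(\fn)+\mu(\fg/\fn)=\mu(\fn)$.

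For the rank, I would fix a torus $\ft\subseteq\fg$ of maximal dimension. By Lemma \ref{mu}(1) its image $(\ft+\fn)/\fn$ is a torus of maximal dimension of $\fg/\fn$, hence trivial since $\mu(\fg/\fn)=0$; therefore $\ft\subseteq\fn$ and $\ft+\fn=\fn$, and the same part of Lemma \ref{mu} shows $\ft=\ft\cap\fn$ is a torus of maximal dimension of $\fn$. Next I would record that $\rk(\fg/\fn)=\dim_k\fg/\fn$: because $\mu(\fg/\fn)=0$, the zero torus is a torus of maximal dimension of $\fg/\fn$, its centraliser is all of $\fg/\fn$, and this centraliser is a Cartan subalgebra (being the centraliser of a maximal torus), so Premet's invariance theorem forces $\rk(\fg/\fn)=\dim_k C_{\fg/\fn}(0)=\dim_k\fg/\fn$. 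It is worth noting that this last point should be argued through the definition of a Cartan subalgebra together with Premet's theorem, not by appealing to nilpotency of $\fg/\fn$ as an ordinary Lie algebra, which may fail.

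With these observations in hand I would apply Proposition \ref{rk}(2) to the chosen $\ft$ and substitute $\ft+\fn=\fn$, $\mu(\fg/\fn)=0$, and $\rk(\fg/\fn)=\dim_k\fg/\fn$:
$$\rk(\fg)=\rk(\ft+\fn)+\rk(\fg/\fn)-\mu(\fg/\fn)=\rk(\fn)+\dim_k\fg/\fn,$$
which is the assertion. (Alternatively, one can get ``$\leq$'' directly from Proposition \ref{rk}(1) and ``$\geq$'' from equation \eqref{1} combined with $\dim_k C_{\fn}(\ft)=\rk(\fn)$, the latter being valid precisely because $\ft$ is a torus of maximal dimension of $\fn$.) The only ``obstacle'' is bookkeeping — keeping track of which ambient algebra each torus is maximal in — so I expect the write-up to be short.
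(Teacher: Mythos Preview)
Your argument is correct and matches the paper's: record $\mu(\fg/\fn)=0$ and $\rk(\fg/\fn)=\dim_k\fg/\fn$, then apply Lemma~\ref{mu}(2) and Proposition~\ref{rk}(2); you make explicit the step $\ft\subseteq\fn$ (hence $\ft+\fn=\fn$) that the paper leaves to the reader. One aside: your caution that $\fg/\fn$ ``may fail'' to be nilpotent as an ordinary Lie algebra is unwarranted --- each $x\in\fg/\fn$ satisfies $x^{[p]^m}=0$ for some $m$, so $(\ad x)^{p^m}=\ad(x^{[p]^m})=0$ and Engel's theorem gives nilpotency --- but this side remark plays no role in your proof.
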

\begin{proof}
By assumption, we know $\mu(\fg/\fn)=0$ and $\dim_k\fg/\fn=\rk(\fg/\fn)$,
so that our assertions follow from
Lemma \ref{mu}(2) and Proposition \ref{rk}(2).
\end{proof}

Let $(\fg,[p])$ be a restricted Lie algebra.
We denote by $$\Tor(\fg):=\{\ft\subseteq\fg;~\dim_k\ft=\mu(\fg)\}$$ the
toral variety of $\fg$ (see \cite{Fa04}, \cite{BFS}).
Moreover, the variety $\Tor(\fg)$ is irreducible (\cite[(1.6)]{FV01}).
Let $\ft\in\Tor(\fg)$ be a torus of maximal dimension.
We denote by $S(\fg,\ft)$ the toral stabilizer of $\fg$ relative to $\ft$ (see \cite[Page 4194]{Fa04} for details).
In particular, $S(\fg,\ft)$ is a subgroup of $\Aut_p(\ft)\cong\GL_{\mu(\fg)}(\FF_p)$.


\begin{Theorem}\label{trivial modules}
Let $V$ be a restricted $\fg$-module of Lie algebra $(\fg,[p])$, and $\ft\in\Tor(\fg)$ a torus of maximal dimension. Suppose that $S(\fg,\ft)\cong\GL_{\mu(\fg)}(\FF_p)$. 
\begin{enumerate}
\item[(i)] If $\dim_k V<p^{\mu(\fg)}-1$, then the trivial $\fg$-module $k$ is the only composition factor of $V$ (i.e. $V=k^{\dim_k V}$ in the Grothendieck group).
\item[(ii)] Moreover, if $\fg=\fg^{[p]}+[\fg,\fg]$, then $V\cong k^{\dim_k V}$. 
\end{enumerate}
\end{Theorem}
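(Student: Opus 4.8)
The plan is to reduce everything to the action of the diagonalizable part of $\fg$ coming from the torus $\ft$, and to exploit the hypothesis $S(\fg,\ft)\cong\GL_{\mu(\fg)}(\FF_p)$ to control which weights can occur. First I would recall that since $V$ is a restricted $\fg$-module, the action of $\ft$ on $V$ is semisimple with eigenvalues in the additive group of linear forms $\lambda\colon\ft\to k$ satisfying $\lambda(t)^p=\lambda(t^{[p]})$ for all $t\in\ft$; fixing an $\FF_p$-basis $t_1,\dots,t_{\mu(\fg)}$ of $\ft$ consisting of toral elements ($t_i^{[p]}=t_i$), such a $\lambda$ is determined by the tuple $(\lambda(t_1),\dots,\lambda(t_{\mu(\fg)}))\in\FF_p^{\mu(\fg)}$, so there are exactly $p^{\mu(\fg)}$ possible $\ft$-weights, forming an $\FF_p$-vector space $X$ of dimension $\mu(\fg)$. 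Write $V=\bigoplus_{\lambda\in X}V_\lambda$ for the weight space decomposition.

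The key point is that the toral stabilizer $S(\fg,\ft)$ acts on the set of weights $X$ (through its identification with automorphisms of $\ft$, hence of $X=\Hom_{\FF_p}(\ft,\FF_p)$), and — this is the step I would have to justify carefully by unwinding the definition of $S(\fg,\ft)$ from \cite{Fa04} — the multiset of nonzero weights of \emph{any} restricted $\fg$-module is stable under this action. Concretely, if $g\in S(\fg,\ft)$ then there is an automorphism of $(\fg,[p])$ carrying $\ft$ to itself and inducing $g$ on $\ft$; twisting $V$ by this automorphism gives an isomorphic $\fg$-module whose weight spaces are permuted by $g$, so $\dim_k V_\lambda=\dim_k V_{g\cdot\lambda}$ for all $\lambda\ne 0$. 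Under the hypothesis $S(\fg,\ft)\cong\GL_{\mu(\fg)}(\FF_p)$, the group acts transitively on the $p^{\mu(\fg)}-1$ nonzero elements of $X$, so all nonzero weight spaces have a common dimension $d$, whence $\dim_k V=\dim_k V_0+(p^{\mu(\fg)}-1)d$. If $\dim_k V<p^{\mu(\fg)}-1$ this forces $d=0$, i.e. $V=V_0$: the torus $\ft$ acts trivially on $V$. Since $\ft$ is a torus of maximal dimension, a Cartan subalgebra $\fh=C_\fg(\ft)$ satisfies $\dim_k\fh=\rk(\fg)$, and the fact that $\ft$ annihilates $V$ together with the structure of restricted modules over $\fh$ (whose semisimple part is governed by $\ft$, cf.\ \cite[\S 2]{SF}) shows that every composition factor of $V$ is $\ft$-trivial; a composition factor on which $\ft$ acts trivially is one-dimensional as a module over the (restricted) image of $\fg$, because the nonzero $\ft$-weights are exactly the obstruction to triviality. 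Running the same weight argument on an arbitrary composition factor $W$ (which again satisfies $\dim_k W<p^{\mu(\fg)}-1$) gives $\ft\cdot W=0$, and then a direct argument with the $p$-operation — any $x\in\fg$ acts on $W$ with all $p$-th power iterates, and the absence of nonzero $\ft$-weights kills the semisimple part of each such action — yields $\dim_k W=1$ and that $\fg$ acts on $W$ through a restricted character $\fg\to k$. This proves (i).

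For (ii), suppose in addition $\fg=\fg^{[p]}+[\fg,\fg]$. By (i), $V$ is an iterated extension of one-dimensional modules on each of which $\fg$ acts by a character $\chi\colon\fg\to k$; but a character of a restricted Lie algebra must vanish on $[\fg,\fg]$ (by multiplicativity, as for ordinary Lie algebras) and on $\fg^{[p]}$ in the sense that $\chi(x^{[p]})=\chi(x)^p$, and more relevantly the restricted character must be \emph{trivial} once $\fg=\fg^{[p]}+[\fg,\fg]$: indeed on $[\fg,\fg]$ it is zero, and on a $p$-th power $x^{[p]}$ we get $\chi(x^{[p]})=\chi(x)^p$, which combined with $x^{[p]}$ lying in the span of such elements and the additivity of $\chi$ forces $\chi\equiv 0$ after a short Frobenius-semilinearity computation. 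Hence every composition factor is the trivial module $k$. Finally, to upgrade the composition series to a direct sum decomposition $V\cong k^{\dim_k V}$ it suffices to show $\Ext^1_{\fg}(k,k)=0$ under the hypothesis, and this is exactly the statement that $\Hom_{\fg}(\fg,k)=(\fg/\fg^{[p]}+[\fg,\fg])^{*}=0$, which holds by assumption; one then concludes by induction on $\dim_k V$ that $V$ is semisimple, completing the proof.

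The main obstacle, I expect, is the second step: making precise and rigorous the claim that the nonzero-weight multiset of every restricted $\fg$-module is $S(\fg,\ft)$-stable. This requires going back to the definition of the toral stabilizer — it is defined via the conjugating automorphisms of $\fg$ that normalize $\ft$, rather than literally as a subgroup of $\Aut(\ft)$ acting on modules — and checking that those automorphisms can be chosen to act on the given module $V$ (or that it is enough to twist $V$), which is the technical heart of the argument. The rest is bookkeeping with restricted weights and a standard $\Ext$-vanishing reduction.
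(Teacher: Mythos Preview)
Your overall strategy matches the paper's: decompose $V$ into $\ft$-weight spaces, use the hypothesis on $S(\fg,\ft)$ to force all nonzero weight spaces to have a common dimension $d$, and conclude $d=0$ from the bound on $\dim_k V$. The equidimensionality step you flag as the main obstacle is exactly \cite[Corollary~6.2(2)]{BFS}, which the paper simply cites. Your heuristic ``twist $V$ by the normalizing automorphism to get an isomorphic module'' is not literally correct (for an arbitrary automorphism $\sigma$ of $\fg$ the twist $V^\sigma$ need not be isomorphic to $V$); what \cite{BFS} actually proves is that the specific exponential operators generating $S(\fg,\ft)$ lift to invertible operators on any restricted $\fg$-module, furnishing the needed intertwiners.

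There is, however, a real gap in your passage from ``$\ft$ annihilates $V$'' to ``every composition factor is trivial''. Your claim that ``the absence of nonzero $\ft$-weights kills the semisimple part of each such action'' is not justified: for $x\in\fg$ the semisimple part $x_s$ lies in \emph{some} torus of $\fg$, but nothing forces it to lie in $\ft$, so triviality of the $\ft$-action on a composition factor $W$ tells you nothing directly about how $x_s$ acts. The paper closes this cleanly by passing to the annihilator $I:=\{x\in\fg:x.V=0\}$, a $p$-ideal containing $\ft$; since $\ft$ has \emph{maximal} dimension, Lemma~\ref{mu}(2) gives $\mu(\fg/I)=\mu(\fg)-\mu(I)=0$, so $\fg/I$ is $p$-nilpotent and hence every restricted $\fg/I$-module, in particular $V$, has only trivial composition factors. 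This is precisely where the maximality of $\ft$ enters, and your argument does not invoke it at this step.

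For (ii), your discussion of why the one-dimensional composition factors carry the trivial character is redundant: part (i) already asserts that each composition factor is the trivial module $k$, not merely one-dimensional. The $\Ext^1$-vanishing you sketch is exactly the paper's argument, via Hochschild's identification $\Ext^1_{U_0(\fg)}(k,k)\cong(\fg/(\fg^{[p]}+[\fg,\fg]))^\ast=(0)$.
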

\begin{proof}
Let
\begin{equation}\label{weight space decomp}
V=\bigoplus\limits_{\lambda\in\Lambda_V}V_{\lambda}
\end{equation}
relative to $\ft$.
According to \cite[Corollary 6.2(2)]{BFS}, we know that all weight spaces
belonging to a non-zero weights have the same dimension.
Let
$$I:=\{x\in\fg;~x.V=(0)\}$$
be the annihilator of $V$ in $\fg$.
By assumption, this is a $p$-ideal that contains $\ft$,
It follows from Lemma \ref{mu}(2) that $\mu(\fg/I)=0$,
so that $\fg/I$ is $p$-nilpotent. We obtain (i).

(ii) A classical result by Hochschild (see \cite[Theorem 2.1]{Hochschild1}) states that 
\begin{equation}
   \Ext^1_{U_0(\fg)}(k,k)\cong H^1_\ast(\fg,k) \cong (\fg/(\fg^{[p]}+[\fg,\fg]))^\ast =(0).
\end{equation}
Now (i) implies the assertion.
\end{proof}

\begin{Corollary}\label{subalgebra rank}
Let $\fh\subseteq\fg$ be a $p$-subalgebra of the restricted Lie algebra $(\fg,[p])$. Suppose that $\mu(\fh)=\mu(\fg)$.
If $\ft\in\Tor(\fh)$ such that
\begin{enumerate}
\item[(a)] $S(\fh,\ft)\cong\GL_{\mu(\fh)}(\FF_p)$, and
\item[(b)] $\dim_k\fg-\dim_k\fh<p^{\mu(\fg)}-1$,
\end{enumerate}
then $\rk(\fg)=\rk(\fh)+\dim_k\fg-\dim_k\fh$.
\end{Corollary}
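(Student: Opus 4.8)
The plan is to feed the adjoint module $V:=\fg/\fh$ over $\fh$ into Theorem~\ref{trivial modules}(i).

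Since $\fh$ is a $p$-subalgebra, the restricted homomorphism $\ad\colon\fg\to\fgl(\fg)$ restricts to a restricted representation of $\fh$ on $\fg$ for which $\fh$ is a restricted submodule; hence $V=\fg/\fh$ is a restricted $\fh$-module with $\dim_k V=\dim_k\fg-\dim_k\fh$. Moreover, as $\ft\in\Tor(\fh)$ has dimension $\mu(\fh)=\mu(\fg)$, it is a torus of maximal dimension in $\fg$ as well, i.e. $\ft\in\Tor(\fg)$; the theorem of Premet (\cite[Theorem 1]{Pre}) recalled in Section~2 then gives
$$\rk(\fg)=\dim_k C_{\fg}(\ft)\qquad\text{and}\qquad\rk(\fh)=\dim_k C_{\fh}(\ft).$$
Finally, hypotheses (a) and (b) are exactly the hypotheses of Theorem~\ref{trivial modules}(i) applied to the $\fh$-module $V$ and the torus $\ft$: (a) is the assumption $S(\fh,\ft)\cong\GL_{\mu(\fh)}(\FF_p)$, and (b) reads $\dim_k V<p^{\mu(\fh)}-1$.

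So first I would invoke Theorem~\ref{trivial modules}(i) to conclude that the trivial $\fh$-module $k$ is the only composition factor of $V$. Restricting the action to the torus $\ft\subseteq\fh$: every finite-dimensional restricted $\ft$-module is semisimple with one-dimensional simple constituents (equivalently, $U_0(\ft)$ is a product of copies of $k$), so a module all of whose composition factors are trivial is a direct sum of trivial $\ft$-modules. Hence $V|_{\ft}$ is trivial, i.e. every $\ad(\ft)$-weight occurring in $V$ is $0$, so $C_V(\ft)=V$. Next I would decompose $\fg=\bigoplus_{\lambda}\fg_{\lambda}$ into $\ad(\ft)$-weight spaces; since $\fh$ is $\ad(\ft)$-stable we get $\fh=\bigoplus_{\lambda}(\fh\cap\fg_{\lambda})$ and an isomorphism of $\ft$-modules $V\cong\bigoplus_{\lambda}\fg_{\lambda}/(\fh\cap\fg_{\lambda})$. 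Comparing the weight-zero components (and using $C_{\fg}(\ft)=\fg_{0}$, $C_{\fh}(\ft)=\fh\cap\fg_{0}$) yields
$$\dim_k C_{\fg}(\ft)=\dim_k C_{\fh}(\ft)+\dim_k C_V(\ft).$$
Combining this with $C_V(\ft)=V$ and the two displayed identities above gives
$$\rk(\fg)=\dim_k C_{\fh}(\ft)+\dim_k V=\rk(\fh)+\dim_k\fg-\dim_k\fh,$$
which is the assertion.

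I do not expect a genuine obstacle: the substance is entirely contained in Theorem~\ref{trivial modules}, and this corollary is its transcription to the adjoint module of a pair $\fh\subseteq\fg$. The one point needing a little care is the passage from "$k$ is the only composition factor of $V$ as an $\fh$-module" to "$C_V(\ft)=V$"; this uses only the semisimplicity of restricted $\ft$-modules for a torus $\ft$. Everything else is routine bookkeeping with the (automatically split) weight-space decomposition attached to a torus.
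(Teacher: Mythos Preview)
Your proof is correct and follows essentially the same route as the paper: apply Theorem~\ref{trivial modules} to the $\fh$-module $\fg/\fh$ to see that $\ft$ acts trivially, then compare the $\ft$-weight decompositions of $\fg$ and $\fh$ to conclude $\rk(\fg)-\rk(\fh)=\dim_kC_{\fg}(\ft)-\dim_kC_{\fh}(\ft)=\dim_k\fg-\dim_k\fh$. Your write-up is simply more explicit about the intermediate step (that trivial composition factors force trivial $\ft$-action, by semisimplicity of restricted torus modules), which the paper leaves implicit.
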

\begin{proof}
Thanks to Theorem \ref{trivial modules},
the torus $\ft$ of $\fh$ acts trivially on $\fg/\fh$.
Hence, $\fg_\alpha=\fh_\alpha$ for every root $\alpha$ and 
$$\dim_k\fg-\dim_k\fh=\dim_kC_{\fg}(\ft)-\dim_kC_{\fh}(\ft)=\rk(\fg)-\rk(\fh).$$
\end{proof}

\begin{Corollary}\label{ideal}
Let $\fh\subseteq\fg$ be a simple $p$-subalgebra of the restricted Lie algebra $(\fg,[p])$.
If $\ft\in\Tor(\fh)$ such that
\begin{enumerate}
\item[(a)] $S(\fh,\ft)\cong\GL_{\mu(\fh)}(\FF_p)$, and
\item[(b)] $\dim_k\fg-\dim_k\fh<p^{\mu(\fh)}-1$,
\end{enumerate}
then $\fh$ is an ideal in $\fg$.
\end{Corollary}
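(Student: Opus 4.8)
The plan is to regard $\fg$ as a restricted module over $\fh$ via the adjoint action and to apply Theorem \ref{trivial modules} to the quotient $\fg/\fh$. First I would note that, since $\fh$ is a $p$-subalgebra, the map $\ad|_\fh\colon\fh\to\fgl(\fg)$ is a restricted homomorphism of restricted Lie algebras, so $\fg$ is a restricted $\fh$-module; as $[\fh,\fh]\subseteq\fh$, the subspace $\fh$ is an $\fh$-submodule, whence $V:=\fg/\fh$ is a restricted $\fh$-module with $\dim_k V=\dim_k\fg-\dim_k\fh$.

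The second step is to check that $\fh$ and $V$ satisfy the hypotheses of Theorem \ref{trivial modules}(ii), read with $\fh$ in place of $\fg$ and with the given $\ft\in\Tor(\fh)$. Hypothesis (a) gives $S(\fh,\ft)\cong\GL_{\mu(\fh)}(\FF_p)$. Hypothesis (b) gives $\dim_k V<p^{\mu(\fh)}-1$. Finally, since $\fh$ is simple it is in particular non-abelian, so $[\fh,\fh]$ is a nonzero ideal of $\fh$ and hence $[\fh,\fh]=\fh$; a fortiori $\fh=\fh^{[p]}+[\fh,\fh]$, which is the extra condition needed for part (ii).

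Thus Theorem \ref{trivial modules}(ii) yields $V\cong k^{\dim_k V}$ as an $\fh$-module; that is, $x.v=0$ for all $x\in\fh$ and $v\in\fg/\fh$. Unravelling this, $[x,y]\in\fh$ for every $x\in\fh$ and $y\in\fg$, i.e. $[\fg,\fh]\subseteq\fh$, so $\fh$ is an ideal of $\fg$.

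I do not expect a genuine obstacle here; the argument is a direct application of Theorem \ref{trivial modules}. The only points deserving a line of care are the verification that $\fg/\fh$ is truly a restricted $\fh$-module (so that Theorem \ref{trivial modules} applies without modification) and the observation that simplicity of $\fh$ forces $\fh=\fh^{[p]}+[\fh,\fh]$, which is precisely what upgrades the conclusion from ``only trivial composition factors'' (part (i)) to ``trivial action'' (part (ii)). Note also that Corollary \ref{subalgebra rank} cannot be quoted directly, since we have not assumed $\mu(\fh)=\mu(\fg)$; this is why the proof goes through Theorem \ref{trivial modules} itself.
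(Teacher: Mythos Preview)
Your proposal is correct and follows essentially the same route as the paper: apply Theorem~\ref{trivial modules} to the restricted $\fh$-module $\fg/\fh$ and use simplicity of $\fh$ to conclude the action is trivial. The only cosmetic difference is that the paper states this in two sentences, while you spell out explicitly that simplicity gives $\fh=[\fh,\fh]\subseteq\fh^{[p]}+[\fh,\fh]$ so that part~(ii) applies; this is a valid (and arguably the cleanest) way to read the paper's terse ``$\fh$ is simple, it follows that the module $\fg/\fh$ is trivial.''
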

\begin{proof}
Now apply Theorem \ref{trivial modules} to the $\fh$-module $\fg/\fh$.
Note that $\fh$ is simple, it follows that the module $\fg/\fh$ is trivial.
\end{proof}

\begin{Example}\label{example W(n) summand}
Let $W(n)$ be the restricted Jacobson-Witt algebra.
Suppose that $W(n) \subseteq\fg$ has codimension $<p^n-1$.
Note that $W(n)$ is simple, hence is an ideal of $\fg$ (Corollary \ref{ideal}).
We have the short exact sequences of restricted Lie algebras:
$$(0)\rightarrow W(n)\rightarrow\fg\rightarrow\fg/W(n)\rightarrow(0).$$
Note that $W(n)$ is centerless with all derivations being inner (see \cite[\S 4,  Theorem 8.5]{SF}),
it follows from a direct computation that
$$\fg=C_{\fg}(W(n))\oplus W(n).$$ 
\end{Example}

\begin{Corollary}\label{rep of simple Lie algebras}
Suppose that $p>3$.
Let $\fg$ be a non-classical restricted simple Lie algebra and $V$ be a restricted $\fg$-module. If $\dim_kV<p^{\mu(\fg)}-1$,
then $V$ is a trivial module.
\end{Corollary}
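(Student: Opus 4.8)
The plan is to reduce the statement to Theorem~\ref{trivial modules}(ii), whose two hypotheses we must verify for $\fg$: that $\fg=\fg^{[p]}+[\fg,\fg]$, and that $S(\fg,\ft)\cong\GL_{\mu(\fg)}(\FF_p)$ for some torus $\ft\in\Tor(\fg)$ of maximal dimension. The first is free of charge: $\fg$ simple gives $[\fg,\fg]=\fg$, hence $\fg=\fg^{[p]}+[\fg,\fg]$. So everything rests on producing a torus of maximal dimension whose toral stabilizer is the \emph{full} group $\GL_{\mu(\fg)}(\FF_p)$ -- and this is precisely the point at which non-classicality is needed, since a classical $\fg$ such as $\fsl_2$ has toral stabilizer equal only to the image of the Weyl group, a proper subgroup of $\GL_{\mu(\fg)}(\FF_p)$ as soon as $p>3$.

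First I would invoke the classification of restricted simple Lie algebras: for $p>3$ a non-classical one is of Cartan type, hence isomorphic to (the restricted form of) $W(n)$, $S(n)$, $H(2m)$ or $K(2m+1)$ (see e.g.~\cite{BW}); in particular the Melikian algebras, which carry no restricted structure, do not occur. For each of these four families one exhibits an optimal torus of maximal dimension whose toral stabilizer is $\GL_{\mu(\fg)}(\FF_p)$, via the multiplicative model of the truncated polynomial algebra: after the substitution $y_i=1+x_i$ the algebra $k[x_1,\dots,x_n]/(x_1^p,\dots,x_n^p)$ becomes the group algebra $k[(\ZZ/p)^n]$, and the torus $\ft_0=\bigoplus_i k\,y_i\p_{y_i}\subseteq W(n)$ is normalized by the copy of $\GL_n(\FF_p)$ acting through the group automorphisms of $(\ZZ/p)^n$, the induced map $\GL_n(\FF_p)\to\Aut_p(\ft_0)\cong\GL_n(\FF_p)$ being an isomorphism; hence $S(W(n),\ft_0)=\GL_n(\FF_p)$. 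Intersecting $\ft_0$ with the divergence-free part gives a torus of maximal dimension of $S(n)$ whose stabilizer is the Levi quotient $\GL_{n-1}(\FF_p)$ of a maximal parabolic; the torus of $H(2m)$ cut out by the embedding $\GL_m(\FF_p)\hookrightarrow\mathrm{Sp}_{2m}(\FF_p)$ has stabilizer $\GL_m(\FF_p)$; and the contact case $K(2m+1)$ is exactly the one analysed in the computation of $\rk(K(n))$ below. In each case $S(\fg,\ft)\cong\GL_{\mu(\fg)}(\FF_p)$, and the $W$, $S$, $H$ cases can also be extracted from \cite{Fa04} and \cite{BFS}.

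Granting this, Theorem~\ref{trivial modules}(ii) applies directly to the restricted $\fg$-module $V$, and the hypothesis $\dim_k V<p^{\mu(\fg)}-1$ forces $V\cong k^{\dim_k V}$; that is, $V$ is a trivial module.

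The one laborious step is the per-family identification of the toral stabilizers; this is the main obstacle, but it is entirely mechanical once the multiplicative model above is set up, and everything else in the argument is either the classification input \cite{BW} or the trivial remark that $[\fg,\fg]=\fg$.
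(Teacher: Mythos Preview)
Your overall strategy coincides with the paper's: invoke the classification, verify $S(\fg,\ft)\cong\GL_{\mu(\fg)}(\FF_p)$, and then apply Theorem~\ref{trivial modules}(ii) using $[\fg,\fg]=\fg$. The paper simply cites \cite[Theorem~5.3, 5.5]{BFS} for the toral stabilizer statement, whereas you sketch the multiplicative-model construction; either is acceptable.

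There is, however, a genuine gap. Your assertion that ``the Melikian algebras, which carry no restricted structure, do not occur'' is false: for $p=5$ the Melikian algebra $\mathcal{M}(1,1)$ \emph{is} a restricted simple Lie algebra (it is the unique restricted member of the Melikian family), and it is neither classical nor of Cartan type. The classification for $p>3$ therefore reads ``Cartan type or Melikian'', exactly as the paper states, and your case analysis omits this algebra entirely. Since the statement is claimed for all $p>3$, the case $p=5$, $\fg=\mathcal{M}(1,1)$ must be handled. Fortunately the missing input is available: \cite[Theorem~5.5]{BFS} establishes $S(\mathcal{M}(1,1),\ft)\cong\GL_2(\FF_5)$ for a suitable torus of maximal dimension $\mu(\mathcal{M}(1,1))=2$, after which your argument goes through unchanged. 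You should replace the incorrect dismissal of the Melikian case by this citation (or by an explicit verification in the spirit of your multiplicative model, though for $\mathcal{M}(1,1)$ this is less straightforward than for the Cartan types).

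A minor remark: your descriptions of the toral stabilizers for $S(n)$ and $H(2m)$ are heuristic rather than proofs; since you already point to \cite{Fa04} and \cite{BFS} for these, it would be cleaner to cite them outright, as the paper does, rather than leave the impression that the sketch constitutes the argument.
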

\begin{proof}
According to the classification of restricted simple Lie algebras (cf. \cite{S1}), we know that $\fg$ should be a Lie algebra of Cartan type or Melikian algebra. Thanks to \cite[Theorem 5.3, 5.5]{BFS}, we have
$$S(\fg,\ft)\cong\GL_{\mu(\fg)}(\FF_p).$$
Since $\fg$ is simple, now Theorem \ref{trivial modules} implies that $V$ must be trivial. 
\end{proof}
\begin{Remark}
In \cite[Lemma 3.6]{Jantzen}, Jantzen gave a computation of the lower bound of non-trivial irreducible modules for the restricted simple Lie algebras of Cartan type.
\end{Remark}

\begin{Example}
The simple modules for the Witt algebra $W(1)$ were determined in \cite{Chang}. There are precisely $p$ isomorphism classes of simple
restricted $W(1)$-modules.
These modules are represented by $L(i);~0\leq i\leq p-1$ and the dimensions are given by $\dim_kL(0)=1$, $\dim_kL(p-1)=p-1$ and $\dim_kL(i)=p;~1\leq i\leq p-2$.

More generally, if a simple Lie algebra $S$ possesses a faithful irreducible representation of dimension $\leq p-1$, then $S$ is either classical or the Witt algebra $W(1)$ (see \cite{S0}).
\end{Example}

\section{Rank of Contact algebra $K(n)$}
In this section, we assume that $p\geq 3$.
The reader is referred to \cite[\S 4]{SF} for basic facts concerning the Lie algebras of Cartan type.
We denote by $W(n)$, $S(n)$, $H(n);~n=2r$ and $K(n);~n=2r+1$ the restricted simple Lie algebras of Cartan type.
We recall the following results due to Demushkin \cite{D1}, \cite{D}, as corrected in \cite[\S7.5]{S1}.

\begin{Lemma}\label{D's result}
\begin{enumerate}
\item[(1)] $\mu(W(n))=\rk(W(n))=n$.
\item[(2)] $\mu(S(n))=n-1$ \rm{and} $\rk(S(n))=(n-1)(p-1)$.
\item[(3)] $\mu(H(n))=r$ \rm{and} $\rk(H(n))=p^r-2$.
\item[(4)] $\mu(K(n))=r+1$.
\end{enumerate}
\end{Lemma}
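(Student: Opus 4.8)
The plan is to treat the four items uniformly by realizing each algebra inside $W(n)=\Der(\cO(n;\underline 1))$, where $\cO(n;\underline 1)=k[x_1,\dots,x_n]/(x_1^p,\dots,x_n^p)$ has $k$-basis the monomials $x^a$ ($0\le a_j\le p-1$) and $W(n)$ has $k$-basis $\{x^a\p_i\}$. In each case I would write down an explicit \emph{standard} torus $\ft$, check that $\dim_k\ft$ equals the asserted value of $\mu$, and then quote Demushkin's theorem that $\ft$ is of maximal dimension. Granting this, the $\mu$-values are immediate, and since $\rk(\fg)=\dim_k C_\fg(\ft)$ for any torus $\ft$ of maximal dimension (Premet's theorem, recalled above), each rank reduces to counting the zero-weight vectors of $\fg$ relative to $\ft$, which I would read off the monomial basis.

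For (1) I would take $\ft=\bigoplus_{i=1}^n k\,x_i\p_i$; as operators on $\cO(n;\underline 1)$ one has $(x_i\p_i)^{[p]}=x_i\p_i$, and these derivations commute, so $\ft$ is an $n$-dimensional torus and $\mu(W(n))=n$. Since $[x_i\p_i,x^a\p_j]=(a_i-\delta_{ij})\,x^a\p_j$, a basis vector $x^a\p_j$ centralizes $\ft$ exactly when $a=e_j$, whence $C_{W(n)}(\ft)=\ft$ and $\rk(W(n))=n$. For (2) I would pass to $\widetilde{S}(n)=\{D\in W(n):\operatorname{div}(D)=0\}$ and take $\ft'=\{\sum_i c_ix_i\p_i:\sum_i c_i=0\}$, an $(n-1)$-dimensional torus, so $\mu(S(n))=n-1$; the zero-weight space of $W(n)$ relative to $\ft'$ is spanned by those $x^a\p_j$ with $a\equiv e_j+\lambda(1,\dots,1)\pmod p$ for some $\lambda\in\FF_p$, and cutting this down first by the condition $\operatorname{div}=0$ and then by the passage to the simple algebra $S(n)=S(n;\underline 1)^{(1)}$ leaves, after a direct count, $\dim_k C_{S(n)}(\ft')=(n-1)(p-1)$.

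For (3) and (4) the mechanism is the same, using the Hamiltonian and contact vector fields $D_H$, $D_K$. With $n=2r$ I would take $\ft$ spanned by the $r$ commuting toral elements $D_H(x_ix_{i+r})$ ($1\le i\le r$), so $\mu(H(n))=r$; the zero-weight monomials are precisely those $x^a$ with $a_i=a_{i+r}$ for all $i\le r$, of which there are $p^r$, and discarding the two that do not survive in the simple algebra $H(n)$ — the constant monomial (which lies in $\ker D_H$) and the top monomial $x^{(p-1,\dots,p-1)}$ — gives $\rk(H(n))=p^r-2$. With $n=2r+1$ I would take $\ft$ spanned by $D_K(x_1x_{1+r}),\dots,D_K(x_rx_{2r})$ together with $D_K(x_{2r+1})$; the latter is, up to normalization, the weighted Euler operator $\sum_{i=1}^{2r}x_i\p_i+2x_{2r+1}\p_{2r+1}$ for the contact grading, which has coefficients in $\FF_p$ and is therefore toral, and all $r+1$ elements commute and are linearly independent, so $\mu(K(n))\ge r+1$, while Demushkin's theorem supplies the reverse inequality.

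The genuinely hard input throughout is the maximality of these standard tori — equivalently the upper bounds $\mu(W(n))\le n$, $\mu(S(n))\le n-1$, $\mu(H(n))\le r$, $\mu(K(n))\le r+1$ — which is precisely Demushkin's theorem \cite{D1,D} (as corrected in \cite[\S7.5]{S1}); I would not reprove it, so the argument reduces to (a) exhibiting the tori and (b) the zero-weight bookkeeping for the ranks. The one delicate point in (b) is that, for $S(n)$ and $H(n)$, one must count only the zero-weight basis vectors that actually lie in the simple algebra — rather than in the larger $W(n)$, $\ker\operatorname{div}$, or $\operatorname{im}D_H$ — and check that this number is exactly the asserted value.
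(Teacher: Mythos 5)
The paper does not actually prove this Lemma: it is stated purely as a recollection of Demushkin's results \cite{D1}, \cite{D}, as corrected in \cite[\S 7.5]{S1}, so there is no internal argument to compare against. Your proposal is a reasonable partial re-derivation and, as far as I can check, correct: you import from Demushkin exactly the hard part (the upper bounds, i.e.\ the values of $\mu$), exhibit the standard tori to get the lower bounds, and then recover the ranks from Premet's theorem $\rk(\fg)=\dim_k C_{\fg}(\ft)$ by counting zero-weight basis vectors. The counts you assert do come out right: for $W(n)$ the standard torus is self-centralizing; for $S(n)$ the zero-weight space of $W(n)$ relative to $\ft'$ is $np$-dimensional, the divergence-zero condition cuts each of the $p-1$ "levels" $\lambda\neq p-1$ down by one, and the $n$ remaining divergence-free vectors $x^{\tau-(p-1)e_j}\p_j$ are precisely the complement of $S(n;\underline 1)^{(1)}$ in $\{D:\operatorname{div}D=0\}$, leaving $(n-1)(p-1)$; for $H(2r)$ the $p^r$ zero-weight monomials lose $1$ and $x^{\tau}$ in passing to the simple algebra, giving $p^r-2$. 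What your route buys, compared with the paper's bare citation, is transparency about where Premet's theorem enters and an independent check of the corrected rank values; what it does not buy is independence from Demushkin, since the maximality of the standard tori is still taken on faith. Two small points you should make explicit if you write this out: the identifications of the complements (that $\{D:\operatorname{div}D=0\}/S(n;\underline 1)^{(1)}$ is spanned by the classes of the $x^{\tau-(p-1)e_j}\p_j$, and that $H'(2r)/H(2r)$ is spanned by the class of $D_H(x^{\tau})$) are the structure facts from \cite[\S 4]{SF} on which your "discarding" step rests, and your torus for $K(n)$ uses $D_K(x_n)$ where the paper's later computation uses $D_K(1+x_n)$ --- both are legitimate for the lower bound $\mu(K(n))\geq r+1$, but they are not the same torus.
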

In this section, we would like to compute the rank of $K(n)$.

Let $A(n):=k[X_1,\dots,X_n]/(X_1^p,\dots,X_n^p)~~(n\geq 1)$ be the truncated
polynomial ring.
Define
$$K''(n):=\{D\in W(n);~D.\omega_K\in A(n)\omega_K\},$$
where $n=2r+1$,~$\omega_K$ is the Cartan differetial form of type $K$ (cf.\cite[\S 4.1]{SF}).
By definition, the derived algebra $K(n):=[K''(n),K''(n)]$ is simple
and restricted, and we call it contact algebra (cf.  \cite[Page 174]{SF}).
According to \cite[Page 174, Theorem 5.5]{SF}, we know that $K(n)=K''(n)$ provided $n+3\neq 0\mod(p)$.
Following \cite[\S 4.5]{SF}, we endow the truncated polynomial ring $A(n)$
with the structure of a restricted Lie algebra; the corresponding Lie bracket $\langle,\rangle$ is called contact bracket.
More precisely, define the map (see \cite[Page 169]{SF})
\begin{equation}\label{Dkmap}
D_{K}:A(n)\rightarrow W(n).
\end{equation}
It turns out that the image of $D_K$ is exactly $K''(n)$.

First, we consider the case $n+3\neq 0 \mod(p)$, so that $K(n)=K''(n)$.
Let
$$\ft:=\oplus_{i=1}^rkx_ix_{r+i}\oplus k(1+x_n)\subseteq K(n).$$
According to \cite[page 1357]{BFS},
$\ft$ is a torus of maximal dimension in $K''(n)$.
Then $\ft$ is also a torus of maximal dimension of the centralizer $\mathfrak{c}:=C_{K''(n)}(1+x_n)$.
By \cite[(7.5.15)]{S1}, there is an isomorphism $\phi:\mathfrak{c}\rightarrow P(2r)$
of restricted Lie algebras such that $\phi(1+x_n)=1$,
where $P(2r)$ is the restricted Possion algebra with toral center (cf.\cite[Page 403]{S1}). 

Also, there is a short exact sequence of restricted Lie algebras:
\begin{equation}\label{ses of possion}
(0)\rightarrow k\rightarrow P(2r)\stackrel{D_H}{\rightarrow} H''(2r).
\end{equation}
Here $H''(2r)$ is the annihilator of the Hamiltonian form (see \cite[Page 163]{SF}).
In particular, $H''(2r)$ is a $p$-subalgebra of $W(2r)$.
Let $H'(2r)$ denote the image of $P(2r)$ under $D_H$
and the derived algebra $H'(2r)^{(1)}=H(2r)$ is the Hamiltonian algebra.
We obtain a short exact sequence of restricted Lie algebras:

\begin{equation}\label{ses of possion2}
(0)\rightarrow k\rightarrow P(2r)\stackrel{D_H}{\rightarrow}H'(2r)\rightarrow (0).
\end{equation}

\begin{Proposition}\label{rank H andP}
Keep the notations as above. Then the following statements hold:
\begin{enumerate}
\item[(1)] $\rk(H'(2r))=p^r-1$.
\item[(2)] $\rk(P(2r))=p^r$.
\item[(3)] $\rk(K''(n))=p^r$.
\end{enumerate}
\end{Proposition}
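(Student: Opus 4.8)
The plan is to prove the three assertions in order, each reducing to the material of Section~2 and Lemma~\ref{D's result}. I keep the torus $\ft$ from the paragraph preceding the proposition: by \cite[page 1357]{BFS} it is a torus of maximal dimension of $K''(n)$, it is contained in $\mathfrak{c}=C_{K''(n)}(1+x_n)$, and it is therefore a torus of maximal dimension of $\mathfrak{c}$ as well. Since $\dim_k\ft=r+1$ and $\phi:\mathfrak{c}\rightarrow P(2r)$ is an isomorphism of restricted Lie algebras, this gives $\mu(P(2r))=\mu(\mathfrak{c})=r+1$, consistent with Lemma~\ref{D's result}(4) ($K(n)=K''(n)$ in the case under consideration).

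For (1), I would first feed the short exact sequence (\ref{ses of possion2}) into Lemma~\ref{mu}(2). The kernel there is the toral center $k\cdot 1$ of $P(2r)$, so $\mu(k)=1$ and hence $\mu(H'(2r))=\mu(P(2r))-1=r$. The same sequence gives $\dim_k H'(2r)=p^{2r}-1$, while $\dim_k H(2r)=p^{2r}-2$ is classical (cf.~\cite[\S 4]{SF}), so $H(2r)=H'(2r)^{(1)}$ has codimension $1$ in $H'(2r)$; it is moreover a $p$-ideal of $H'(2r)$, being both the derived subalgebra and a $p$-subalgebra of $W(2r)$. A second application of Lemma~\ref{mu}(2) then yields $\mu(H'(2r)/H(2r))=\mu(H'(2r))-\mu(H(2r))=r-r=0$, so the one-dimensional quotient $H'(2r)/H(2r)$ is $p$-nilpotent, and Corollary~\ref{corollary} together with Lemma~\ref{D's result}(3) gives $\rk(H'(2r))=\rk(H(2r))+1=(p^r-2)+1=p^r-1$.

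For (2), I would set $\fn:=k\cdot 1\subseteq P(2r)$ — the toral center, a $p$-ideal with $P(2r)/\fn\cong H'(2r)$ — and let $\ft_0$ be a torus of maximal dimension of $P(2r)$. Since $\fn$ is a central torus, $\ft_0+\fn$ is again a torus; maximality of $\ft_0$ then forces $\fn\subseteq\ft_0$, so $\rk(\ft_0+\fn)=\rk(\ft_0)=\dim_k\ft_0=\mu(P(2r))=r+1$. Now Proposition~\ref{rk}(2), together with part (1) and $\mu(H'(2r))=r$, gives $\rk(P(2r))=\rk(\ft_0+\fn)+\rk(H'(2r))-\mu(H'(2r))=(r+1)+(p^r-1)-r=p^r$. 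For (3), since $1+x_n\in\ft$ every element of $K''(n)$ centralizing $\ft$ also centralizes $1+x_n$, so $C_{K''(n)}(\ft)=C_{\mathfrak{c}}(\ft)$; as $\ft$ is a torus of maximal dimension both in $K''(n)$ and in $\mathfrak{c}$, applying \cite[Theorem 1]{Pre} to each of them and then part (2) via $\mathfrak{c}\cong P(2r)$ gives $\rk(K''(n))=\dim_k C_{K''(n)}(\ft)=\dim_k C_{\mathfrak{c}}(\ft)=\rk(\mathfrak{c})=\rk(P(2r))=p^r$.

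The only genuinely delicate point is establishing the $p$-nilpotency of the one-dimensional quotient $H'(2r)/H(2r)$ in part (1) — equivalently, that a generator of it is a $p$-nilpotent, rather than a toral, element of $H'(2r)$. Instead of computing the $[p]$-th power map on an explicit representative, I would derive this from the additivity of $\mu$ (Lemma~\ref{mu}(2)) and the value $\mu(P(2r))=r+1$ imported from $K''(n)$, as above; granting that, the remainder is bookkeeping with the two exact sequences and with Corollary~\ref{corollary}, Proposition~\ref{rk}.
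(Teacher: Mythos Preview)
Your proposal is correct and follows essentially the same route as the paper: use the short exact sequence (\ref{ses of possion2}) with Lemma~\ref{mu}(2) to pin down $\mu(H'(2r))$, invoke Corollary~\ref{corollary} on the codimension-one inclusion $H(2r)\subseteq H'(2r)$ for (1), apply Proposition~\ref{rk}(2) to the same sequence for (2), and read off (3) via $\rk(K''(n))=\rk(\mathfrak c)$ and the isomorphism $\mathfrak c\cong P(2r)$. The paper is terser --- it cites $\mu(P(2r))=r+1$ and $\mu(H(2r))=r$ directly from \cite{S1} and leaves implicit both the $p$-nilpotency of $H'(2r)/H(2r)$ and the identity $\ft_0+\fn=\ft_0$ --- whereas you spell these points out, but the underlying argument is the same.
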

\begin{proof}
Note that $\mu(P(2r))=r+1$ (cf.\cite[Page 403]{S1}).
Lemma \ref{mu}(2) and (\ref{ses of possion2}) ensure that $\mu(H'(2r))=r$.
We also have $\dim_k H'(2r)=\dim_kP(2r)-1=p^{2r}-1$.
Since $\dim_k H(2r)=p^{2r}-2$ (see \cite[Page 166]{SF}) and $\mu(H(2r))=\mu(H'(2r))=r$ (see \cite[Corollary 7.5.9(2)]{S1}),
we have $H(2r)$ is a codimension $1$ restricted ideal of $H'(2r)$.
Now Corollary \ref{corollary} implies that $\rk(H'(2r))=\rk(H(2r))+1=p^r-1$ (Lemma \ref{D's result}(3)).
Furthermore, the exact sequence (\ref{ses of possion2}) and Proposition \ref{rk} imply
that $$\rk(P(2r))=r+1+\rk(H'(2r))-\mu(H'(2r))=p^r.$$
Note the centralizer $\mathfrak{c}:=C_{K''(n)}(1+x_n)$ is isomorphic to the restricted Possion algebra $P(2r)$
and $\rk(K''(n))=\rk(\mathfrak{c})$.
Now (3) follows from (2).
\end{proof}

\begin{Remark}
It should be notice that the following facts:
\begin{enumerate}
\item[(a)]$H(2r)\subseteq H'(2r)$, $\mu(H(2r))=\mu(H'(2r))=r$,
\item[(b)]$S(H(2r),\ft)\cong\GL_r(\FF_p)$.
\end{enumerate}
Hence, we can also use Corollary \ref{subalgebra rank} to compute 
the rank of $H'(2r)$.
\end{Remark}

\begin{Theorem}\label{rank of K}
Let $K(n)$ be the restricted contact algebra. Then
\begin{eqnarray}\rk(K(n))=
\begin{cases}
p^r, & n+3\neq 0\mod(p)\cr p^r-1,&n+3=0\mod(p)\end{cases}
\end{eqnarray}
\end{Theorem}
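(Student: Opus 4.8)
The plan is to treat the two cases separately. If $n+3\neq 0\mod(p)$, then $K(n)=K''(n)$ by \cite[\S 4, Theorem 5.5]{SF}, and the assertion is precisely Proposition \ref{rank H andP}(3). Moreover, the proof of that Proposition did not genuinely use the hypothesis $n+3\neq 0\mod(p)$: it relied only on the isomorphism $C_{K''(n)}(1+x_n)\cong P(2r)$ of \cite[(7.5.15)]{S1} and on $\ft$ being a torus of maximal dimension in $K''(n)$, both of which hold for every odd $n$. I will therefore treat $\rk(K''(n))=p^r$ as known in all cases.

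Assume henceforth that $n+3=0\mod(p)$, so that $K(n)=[K''(n),K''(n)]$ is a proper subalgebra of $K''(n)$. By the structure theory of contact algebras (\cite[\S 4, Theorem 5.5]{SF}, \cite[\S 7.5]{S1}), $K(n)$ is a restricted ideal of $K''(n)$ of codimension one, with complement spanned by $D_K(x_1^{p-1}\cdots x_n^{p-1})$. Hence there is a short exact sequence of restricted Lie algebras
$$(0)\rightarrow K(n)\rightarrow K''(n)\rightarrow\fq\rightarrow(0),\qquad\dim_k\fq=1,$$
and the whole problem comes down to the $[p]$-structure on the line $\fq$.

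I expect $\fq$ to be $p$-nilpotent, and the cleanest way to see this uses the standard $\ZZ$-grading of $K''(n)$ (with $\deg x_i=1$ for $1\leq i\leq 2r$ and $\deg x_n=2$, shifted so that the bottom piece $k\,D_K(1)$ sits in degree $-2$), which is bounded above. The element $D_K(x_1^{p-1}\cdots x_n^{p-1})$ is homogeneous of degree $2\bigl((p-1)(r+1)-1\bigr)$, which is positive (here $r\geq 1$, for if $r=0$ then $n+3=4$ and $p\mid 4$ is impossible when $p\geq 3$); and for homogeneous $D$ the element $D^{[p]}$ — the $p$-fold composite of the derivation $D$ — is again homogeneous, of degree $p\deg(D)$. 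Iterating and invoking boundedness forces $D_K(x_1^{p-1}\cdots x_n^{p-1})^{[p]^m}=0$ for $m\gg 0$, so every element of $\fq$ is $[p]$-nilpotent, i.e.\ $\mu(\fq)=0$. Then Corollary \ref{corollary}, applied to the displayed exact sequence, yields $\rk(K''(n))=\rk(K(n))+\dim_k\fq=\rk(K(n))+1$; combining this with $\rk(K''(n))=p^r$ gives $\rk(K(n))=p^r-1$. (One can also bypass the grading: since $\ft\subseteq C_{K''(n)}(1+x_n)\cong P(2r)$ and $\mu(P(2r))=r+1=\mu(K(n))$, one gets $\mu(K''(n))=r+1$ and hence $\mu(\fq)=0$ by Lemma \ref{mu}(2).)

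I expect the main obstacle to be the bookkeeping in the exceptional case: one must confirm that the complement of $K(n)$ in $K''(n)$ really is the homogeneous line $k\,D_K(x_1^{p-1}\cdots x_n^{p-1})$ of positive degree — equivalently, that $\mu(K''(n))=\mu(K(n))$, so that $\fq$ is $p$-nilpotent and not toral — and one should re-examine the proof of Proposition \ref{rank H andP}(3), together with the maximality of $\ft$ in $K''(n)$ quoted from \cite[page 1357]{BFS}, to make sure that neither secretly needs $n+3\neq 0\mod(p)$. Everything else is routine.
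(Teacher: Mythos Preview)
Your proof is correct and follows essentially the same route as the paper: the case $n+3\not\equiv 0\pmod p$ is Proposition~\ref{rank H andP}(3), and for $n+3\equiv 0\pmod p$ one applies Corollary~\ref{corollary} to the codimension-one restricted ideal $K(n)\unlhd K''(n)$ with $p$-nilpotent quotient. You are simply more explicit than the paper on two points it leaves tacit---that the computation $\rk(K''(n))=p^r$ from Proposition~\ref{rank H andP}(3) does not actually depend on the congruence class of $n+3$, and why the one-dimensional quotient $K''(n)/K(n)$ is $p$-nilpotent---but the argument is the same.
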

\begin{proof}
Thanks to Proposition \ref{rank H andP}, we only need consider the case $n+3=0\mod(p)$.
We know that $\dim_kK(n)=\dim_kK''(n)-1=\dim_kA(n)-1=p^{2r+1}-1$ (cf.\cite[\S 4.5]{SF}).
Since $K(n)$ is a restricted ideal of $K''(n)$ and the quotient is $p$-nilpotent,
our assertion immediately follows from Corollary \ref{corollary}.
\end{proof}

\begin{Corollary}\label{no self-centralizing tori of K}
The contact algebra $K(n)$ affords no self-centralizing torus except $p=n=3$.
\end{Corollary}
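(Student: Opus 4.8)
The statement to prove is that a self-centralizing torus in $K(n)$ exists only when $p=n=3$. Recall that a torus $\ft\subseteq\fg$ is \emph{self-centralizing} when $C_{\fg}(\ft)=\ft$; for a torus of maximal dimension this forces $\rk(\fg)=\mu(\fg)$, and conversely if $\rk(\fg)=\mu(\fg)$ then every torus of maximal dimension is self-centralizing (since $\dim_k C_{\fg}(\ft)=\rk(\fg)$ by Premet's theorem). A torus that is \emph{not} of maximal dimension cannot be self-centralizing, because it can be enlarged inside its own centralizer. So the whole question reduces to deciding when the equality $\rk(K(n))=\mu(K(n))$ holds, and both quantities have now been computed: $\mu(K(n))=r+1$ by Lemma~\ref{D's result}(4), and $\rk(K(n))$ is either $p^r$ or $p^r-1$ by Theorem~\ref{rank of K}, where $n=2r+1$.

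\textbf{Key steps.} First I would record the reduction above: $K(n)$ has a self-centralizing torus if and only if $\rk(K(n))=\mu(K(n))=r+1$. Second, I would split into the two cases of Theorem~\ref{rank of K}. In the generic case $n+3\neq 0\bmod p$ we need $p^r=r+1$; since $p\geq 3$ and $r\geq 1$ (as $n=2r+1\geq 3$), one checks $p^r\geq 3^r>r+1$ for all $r\geq 1$, so there is no solution here. In the exceptional case $n+3\equiv 0\bmod p$ we need $p^r-1=r+1$, i.e. $p^r=r+2$; again for $p\geq 3$ the left side grows too fast — for $r=1$ this reads $p=3$, which is a genuine solution, and for $r\geq 2$ we have $p^r\geq 9>r+2$. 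Third, I would check the compatibility of $r=1,p=3$ with the congruence defining the exceptional case: $n=2r+1=3$ and $n+3=6\equiv 0\bmod 3$, so indeed $p=n=3$ falls in the exceptional branch and the equality $\rk(K(3))=3-1=2=r+1$ holds. This pins down the unique case.

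\textbf{Main obstacle.} There is essentially no analytic difficulty — the elementary inequality $p^r>r+1$ (resp. $p^r>r+2$) for $p\geq 3$ and the relevant range of $r$ is immediate by induction on $r$. The only point requiring a little care is the bookkeeping at the boundary: one must be sure that the single surviving solution $r=1$, $p=3$ actually lies in the branch of Theorem~\ref{rank of K} that produced the equation it solves, and that $p=n=3$ is consistent with the running hypothesis $p\geq 3$ of this section. A secondary subtlety worth a sentence is the claim that a non-maximal torus is never self-centralizing: this follows because if $\dim_k\ft<\mu(\fg)$ then $\ft$ is strictly contained in a maximal torus of $C_{\fg}(\ft)$ (the centralizer of a torus is itself a restricted Lie algebra containing tori of dimension $\mu(\fg)$, by the splitting argument in the proof of Proposition~\ref{rk}), so $C_{\fg}(\ft)\supsetneq\ft$. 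Once these are in place the corollary is immediate.
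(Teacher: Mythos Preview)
Your approach is essentially the same as the paper's: reduce the existence of a self-centralizing torus to the numerical equation $\rk(K(n))=\mu(K(n))=r+1$, then solve it case by case using Theorem~\ref{rank of K} and Lemma~\ref{D's result}(4). The only difference is that the paper invokes \cite[Corollary~3.8]{Fa04} for the reduction, while you argue it directly; your appeal to ``the splitting argument in the proof of Proposition~\ref{rk}'' is slightly misplaced (that proposition treats $p$-ideals, not centralizers of arbitrary tori), but the claim follows more cleanly by noting that a self-centralizing torus is automatically maximal, so it equals its own centralizer, which is then a Cartan subalgebra of dimension at least $\rk(\fg)\geq\mu(\fg)$.
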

\begin{proof}
Suppose that there exists a self-centralizing torus.
Thanks to \cite[Corollary 3.8]{Fa04}, we know that every torus of $K(n)$
of maximal dimension is self-centralizing.
It follows from Lemma \ref{D's result}(4) and Theorem \ref{rank of K}
that 
$$r+1=p^r;~n+3\neq 0\mod(p)$$
or
$$r+1=p^r-1;~n+3= 0\mod(p).$$
By a direct computation, this holds only when $n=p=3$.
\end{proof}

\begin{Corollary}\label{dimensionof tori vari}
$$\dim\Tor(K(n))=p^n-p^r$$
\end{Corollary}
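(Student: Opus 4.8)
The plan is to derive the statement from the general formula for the dimension of the toral variety, once the ranks and dimensions already established are substituted. Recall first that for any restricted Lie algebra $(\fg,[p])$ and any torus $\ft\in\Tor(\fg)$, the weight-space decomposition $\fg=\bigoplus_{\alpha}\fg_{\alpha}$ relative to $\ft$ yields $N_{\fg}(\ft)=\fg_{0}=C_{\fg}(\ft)$: indeed $\ft\subseteq\fg_{0}$, and if $x=\sum_{\alpha}x_{\alpha}$ satisfies $[\ft,x]\subseteq\ft$ then $[\ft,x_{\alpha}]=(0)$ for every $\alpha\neq 0$, forcing $x_{\alpha}\in\fg_{\alpha}\cap\fg_{0}=(0)$. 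Using this, together with the irreducibility of $\Tor(\fg)$ recalled earlier (\cite{FV01}), I would invoke (see \cite{Fa04}) the identity
\begin{equation*}
\dim\Tor(\fg)=\dim_k\fg-\dim_k C_{\fg}(\ft)=\dim_k\fg-\rk(\fg),
\end{equation*}
where the second equality is Premet's theorem \cite{Pre}. For $\fg=K(n)$ the hypothesis needed to apply this, namely $S(K(n),\ft)\cong\GL_{r+1}(\FF_p)$, is available since $K(n)$ is a simple restricted Lie algebra of Cartan type (\cite{BFS}).

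It then remains to substitute. As in the proof of Theorem \ref{rank of K}, one has $\dim_k K''(n)=\dim_k A(n)=p^{n}$; hence $\dim_k K(n)=p^{n}$ when $n+3\neq 0\mod(p)$, since $K(n)=K''(n)$, and $\dim_k K(n)=p^{n}-1$ when $n+3= 0\mod(p)$, since $K(n)$ is then a codimension-one ideal of $K''(n)$. Theorem \ref{rank of K} gives $\rk(K(n))=p^{r}$, respectively $\rk(K(n))=p^{r}-1$, in these two cases. Therefore
\begin{equation*}
\dim\Tor(K(n))=\dim_k K(n)-\rk(K(n))=
\begin{cases}
p^{n}-p^{r}, & n+3\neq 0\mod(p),\\
(p^{n}-1)-(p^{r}-1), & n+3= 0\mod(p),
\end{cases}
\end{equation*}
and since $(p^{n}-1)-(p^{r}-1)=p^{n}-p^{r}$, both cases give the asserted value $p^{n}-p^{r}$.

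The crux is the general formula $\dim\Tor(\fg)=\dim_k\fg-\rk(\fg)$, which I would take from \cite{Fa04}. Reproving it requires more than the irreducibility of $\Tor(\fg)$: one must show that a dense subset of $\Tor(\fg)$ consists of a single $\Aut_p(\fg)$-orbit — this is exactly where the condition $S(K(n),\ft)\cong\GL_{r+1}(\FF_p)$ is used — and then compute that orbit's dimension through its stabilizer, which by the observation above is $N_{\fg}(\ft)=C_{\fg}(\ft)$. Granting the formula, the rest is only the bookkeeping of the dimensions and ranks recorded in Proposition \ref{rank H andP} and Theorem \ref{rank of K}.
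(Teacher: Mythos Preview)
Your core argument is the paper's: cite the general identity $\dim\Tor(\fg)=\dim_k\fg-\rk(\fg)$ from \cite[(1.6)]{FV01} and \cite[(3.5)]{Fa04}, then substitute the values from Theorem~\ref{rank of K}. Your case-by-case bookkeeping is correct and more explicit than the paper's one-line version.

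However, your surrounding commentary contains a real misconception. The identity $\dim\Tor(\fg)=\dim_k\fg-\rk(\fg)$ is a general result for finite-dimensional restricted Lie algebras; it does \emph{not} require $S(\fg,\ft)\cong\GL_{\mu(\fg)}(\FF_p)$, and it is \emph{not} established by exhibiting a dense $\Aut_p(\fg)$-orbit in $\Tor(\fg)$. Indeed, Remark~(1) immediately following this Corollary stresses that $K(n)$ affords \emph{no} generic torus, so $\Tor(K(n))$ has no dense orbit at all --- the route you sketch for ``reproving'' the formula would therefore fail precisely for the algebra under consideration. Drop the claimed hypothesis $S(K(n),\ft)\cong\GL_{r+1}(\FF_p)$ and the final paragraph: the citation to \cite[(3.5)]{Fa04} already suffices, with no extra condition needed.
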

\begin{proof}
Thanks to \cite[(1.6)]{FV01} and \cite[(3.5)]{Fa04},
the dimension of the variety $\Tor(K(n))$ is $\dim_kK(n)-\rk(K(n))$.
Now our assertion follows from Theorem \ref{rank of K}.
\end{proof}

Let $(\fg,[p])$ be a restricted Lie algerba with connected automorphism group $G:=\Aut_p(\fg)^{\circ}$. A torus $\ft\in\Tor(\fg)$ is called generic if the orbit $G.\ft$ is a dense subset of the variety $\Tor(\fg)$.

\begin{Remarks}
(1). It is known that the contact algebra $K(n)$ affords no generic torus (\cite[Theorem 6.6]{BFS}).
In his doctoral dissertation \cite[Page 48]{Jensen}, 
the author give a direct proof of the fact.
It is not true since he used the incorrect fact that $K(n)$ admits a self-centralizing torus (see Corollary \ref{no self-centralizing tori of K}).
For convenience, we consider the case  $n+3\neq 0~\mod(p)$.
As before, we know that $K(n)=K''(n)$.
Let $G:=\Aut_p(K(n))^{\circ}$ be the connected automorphism group.
Accoring to \cite[Proposition 3.2]{LN} and Corollary \ref{dimensionof tori vari}, we know that
$$\dim G=\dim_k K(n)-(2r+1)=p^{2r+1}-(2r+1)>p^{2r+1}-p^r=\dim\Tor(K(n)).$$
Hence, we can not get the nonexistence of generic tori by directly comparing the dimensions of $G$ and $\Tor(K(n))$
(By assumption, we do not consider the case $r=1$ and $p=3$). 

(2). Suppose that $p>3$.
Let $(\fg,[p])$ be a non-classical restricted simple Lie algebra and $\ft\in\Tor(\fg)$ a torus of maximal dimension.
According to \cite[Theorem 5.3, 5.5 and Corollary 6.2]{BFS}, we have the following equality:
$$\dim_k\fg=\rk(\fg)+(p^{\mu(\fg)}-1)\dim_k\fg_\alpha,$$
where $\fg_\alpha$ is some root space.
By a direct check (see Lemma \ref{D's result},
Theorem \ref{rank of K} and \cite[Lemma 4.1]{Pre94} for the Melikian algebra), we obtain:
$$\dim_k\fg_\alpha=\max\{n\in\NN;~n(p^{\mu(\fg)}-1)<\dim_k\fg\}.$$ 
\end{Remarks}
\subsection*{Acknowledgements}
I would like to thank Professor Rolf Farnsteiner for his guidance and encouragement.


\end{document}